\newtheorem{theoreme}{Theorem}[section]
\newtheorem{proposition}[theoreme]{Proposition}
\newtheorem{lemme}[theoreme]{Lemma}
\newtheorem{remarque}[theoreme]{Remark}
\newcommand*{\N}{\mathbb{N}}
\newcommand*{\R}{\mathbb{R}}
\newcommand*{\E}{\mathbb{E}}
\DeclareMathOperator{\e}{e}
\DeclareMathOperator{\drm}{d\!} %sinon il y a un espace avec après
\DeclareMathOperator{\Cov}{Cov}
\DeclareMathOperator{\Var}{Var}
\renewcommand{\P}{\mathbb{P}}
\newcommand{\ind}{\mathds{1}}
\newcommand{\fnct}[5]{#1: \begin{array}{ccc} #2&\longrightarrow &#3\\ #4&\longmapsto &#5\end{array}} %fonction sans trait
\newcommand*{\Ncal}{\mathcal{N}}
\newcommand*{\phila}{ \mu }
\newcommand*{\philaa}{ \mu_0 }
\newcommand*{\labee}{ l_0 }
\renewcommand{\leq}{\leqslant}
\renewcommand{\geq}{\geqslant}
\renewcommand{\epsilon}{\varepsilon}
\newcommand*{\abs}[1]{\left \vert#1 \right \vert} %valeur absolue
\definecolor{violet}{rgb}{0.6, 0.4, 0.8}
\title{Risk assessment  using suprema data\thanks{This work was supported by the LABEX MILYON (ANR-10-LABX-0070) of Université de Lyon, within the program "Investissements d'Avenir"(ANR-11-IDEX-0007)
opera operated by the French National Research Agency (ANR).}}
\author{Christophette Blanchet-Scalliet\thanks{University of Lyon, CNRS UMR 5208, Ecole Centrale de Lyon, Institut Camille Jordan, France, christophette.blanchet@ec-lyon.fr}
        \and
        Diana Dorobantu\thanks{University of Lyon, University Lyon 1, ISFA, LSAF (EA 2429) France, diana.dorobantu@univ-lyon1.fr}
        \and
        Laura Gay\thanks{University of Lyon, CNRS UMR 5208, Ecole Centrale de Lyon, Institut Camille Jordan, France, laura.gay@ec-lyon.fr}
        \and 
        Véronique Maume-Deschamps\thanks{University of Lyon, CNRS UMR 5208, Institut Camille Jordan, France, veronique.maume@univ-lyon1.fr}
        \and
        Pierre Ribereau \thanks{University of Lyon, CNRS UMR 5208, Institut Camille Jordan, France, pierre.ribereau@univ-lyon1.fr}
        }
\date{}
\begin{document}
\maketitle

\begin{abstract}
\noindent This paper proposes a stochastic approach to model temperature dynamic and study related risk measures. The dynamic of temperatures can be modelled by a mean-reverting process such as an Ornstein-Uhlenbeck one. In this study, we estimate the parameters of this process thanks to daily observed suprema of temperatures, which are the only data gathered by some weather stations. The expression
of the cumulative distribution function of the supremum is obtained thanks to the law of the hitting time. The parameters are estimated by a least square method quantiles based on this function. Theoretical results, including mixing property and consistency of model parameters estimation, are provided. The parameters estimation is assessed on simulated data and performed on real ones. Numerical illustrations are given for both data. This estimation will allow us to estimate risk measures, such as the probability of heat wave and the mean duration of an heat wave.\\
\textbf {Keywords.}   Ornstein-Uhlenbeck process, supremum law, parameters estimation, heat wave risk assessment.
\end{abstract}

 \vspace{0.3cm}

\section{Introduction}
\label{section1}
Forecasting and assessing the risk of heat waves is a crucial public policy stake. It requires measure tools in order to evaluate the probability of heat waves and their severity. For example, the paper \cite{siliverstovs2010climate} is interested in assessing the likelihood of occurrence of the heat wave of 2003. For that purpose, they model annual maximum temperatures thanks to mean monthly data.  However, the 
available information depends on meteorological stations. Daily extremes (maximum and / or minimum) might be the only available data. Since temperature does not deviate from its mean level, a mean-reverting process such as an Ornstein-Uhlenbeck (OU) process is commonly  used to model temperature process (see \cite{Dischel1998a}, \cite{Dischel1998b} for example). The authors of \cite{DornierQueruel2000} and \cite{AlatonDjehicheStillberger2001} propose to use an ARMA version of the OU process while \cite{BrodySyrokaZervos2002} propose a fractional Brownian motion (to take into account the long range dependence) instead of the classical Brownian motion in the OU process. In \cite{chaumont2006equilibrium}, the OU process is the basic model used to model the local temperature (of air, of ocean water).  \\
The main purpose of this paper is to estimate the parameters of this OU process. Estimation of OU parameters has been done using observations of the process (see \cite{franco2003maximum}) or more recently using hitting time data in \cite{mullowney2008parameter} for the neuronal activity. However, weather stations do not record either of these data. That's why we propose an estimation based on daily observed suprema of temperatures. Once the parameter estimation is done, 
risk measures related to heat waves may be obtained from Monte Carlo simulations of the dynamic of temperatures with the estimated parameters. For example, we would like to estimate the 
probability of heat waves, namely the probability for outdoor air temperature to exceed a threshold (26.67$^\circ$C during 3 days, see \cite{gringorten1968estimating}) or two thresholds (one 
during night and one during day, see \cite{meteofrance}). Other interesting measures would be the corresponding expected area over the threshold or the mean time over the threshold.\\
Recently, lots of results on the first passage time 
of the process have been obtained. In \cite{repalilipatie}, different expressions for the density function of the first hitting time to a fixed level by an OU process are given.  Since hitting time and suprema are related, the cumulative distribution function (cdf) of the supremum is obtained.\\
 Unlike classical quantile estimation (such as done in \cite{castillo2005} or \cite{Peka}), we do not use the cdf inverse and propose though a new approach to estimate the parameters. Thanks 
to the cdf, we perform a least square method to estimate the OU parameters.\\
The paper is organized as follows. In the next section, the estimation problem is presented. Section 3 is concerned with the theoretical tools. Finally, Section 4 is devoted to the numerical illustrations of the estimation and the related risk measures thanks to the only available data : the daily suprema of temperatures.

\section{Estimation Problem}
\label{sec:estimprob}
Since temperature does not deviate from its mean level, a mean-reverting process such as an OU process is commonly used to model temperature process (see \cite{Dischel1998a}, \cite{Dischel1998b} for example). Here, we use a stationary OU process. 
The temperature variations process $X=(X_t)_{t \geq 0}$ is given by :
\[ \drm X_t = l_0\beta_0(\mu_0 - X_t)\drm t + \sqrt{\beta_0}\drm B_t ,~~~X_0 \sim \Ncal\left(\mu_0,\frac{1}{2l_0}\right)\] 
where $\mu_0 \in \R, l_0, \beta_0 \in \R_+^*$, and $(B_t)_{t \geq 0}$ is a standard Brownian motion. Suppose that $X_0$ and $(B_t)_{t \geq 0}$ are independent. We recall that the measure 
$\Ncal\left(\mu_0,\frac{1}{2l_0}\right)$ is the stationary measure. This modelling is reasonable, as, in the applications, we consider observations only from a sub-period of annual observations (e.g. from summer). Let us note $\theta_0=( \beta_0,\mu_0, l_0)$. We say that $X$ is a stationary OU with parameter $\theta_0$.

The parameter $\mu_0$ is the mean of the stationary process. The parameter $\sqrt{\beta_0}$ is the volatility of the process. It indicates the degree of variation. For the temperature, it reveals a tendency to change quickly and unpredictably. If $\beta_0=0$, the process is purely deterministic and well-known then. Finally, the parameter $l_0$ shows the "speed" of mean-reversion. The parameter $l_0\beta_0$ is sometimes called the relaxation parameter. If $l_0=0$, the process is just a Brownian motion, standard if $\beta_0=1$. The influence of these parameters is shown on Figure \ref{influparam} in Appendix \ref{annexe}.

Let us note for $s,r \in \R^+$, $S_{[s,r[}=\sup_{s \leq t < r} X_t$ and $I_{[s,r[}=\inf_{s \leq t < r} X_t$.

Assume that we observe the suprema on a period $[0,T]$ with a partition $(t_i)_{i\geq 0}$ of constant step $h \geq 0$. We then have $n$ suprema $S_{[t_{i-1},t_i[}$ for $i \in \llbracket 1, n \rrbracket$ on disjoint intervals.  Let us remark here that in our problem of daily observations we will take $h=1$. 

Classical estimation methods are not well suited for the parameter estimation from the supremum observations. Indeed, the likelihood maximization requires the probability density function of the supremum and in order to use quantile methods, one needs to know the supremum's cdf inverse. These two functions can only be obtained  by numerical approximations that are more time consuming than numerical methods to get the cdf itself.

This is why we propose to use the cdf of the supremum, denoted $F^*$, whose expression is given in Proposition \ref{prop_cdf_full}.

Let $N_q \in \N^*$ and $s_j$, $j=1\/,\ldots\/, N_q$ be real numbers. Let us denote $F^*_n$ the empirical distribution function on the sample  $S_{[t_{i-1},t_i[}$, $i=1\/,\ldots\/,n$. We recall that, for $t\in\R, F_n^*(t)=\frac{1}{n} \sum_{i=1}^n \ind_{S_{[t_{i-1},t_i[} \leq t}$. 

A way to estimate $\theta_0$ is to use a least square method by minimizing the sum of squares of the differences between theoretical and empirical cdf. Then, we want to minimize the following function $Q_n$ : 
\[ Q_n\left(\theta\right)=\sum_{j=1}^{N_q}\left[F^*\left(s_j,\theta,h\right)-F_n^*(s_j)\right]^2 \]
where $\theta$ is the parameter of the OU process, $F^*(a,\theta,h)=\P\left(S_{[0,h[}\leq a\right)$ and $s_1\/,\ldots\/, s_{N_q}$ are real numbers (to be chosen later).

Thus, $\theta_0$ is estimated by 
\begin{equation}
\label{minimisation_problem}
\widehat{\theta_n}=\left(\widehat{\beta_n},\widehat{\mu_n},\widehat{l_n}\right)= \underset{\theta \in \R \times \R_+^* \times \R_+^*}{\text{argmin}} ~Q_n\left(\theta\right)
\end{equation}

\begin{remarque}
The problem is stated here with suprema but the same reasoning  may be applied to the infima (or both infima and suprema) to deduce the estimation.
\end{remarque} 

\section{Theoretical Tools}

In this section, we present some useful results to estimate the parameters.

\subsection{Cdf of the supremum}

To minimize the function $Q_n$, we need to compute the cdf $F^*$ of the supremum. As the cdf of the supremum is directly linked with the one of the hitting time, we can find $F^*$ thanks to \cite{repalilipatie}.

\begin{proposition}\label{prop_cdf_full}
For $t\in \R^+$ and $a\in \R$, the cdf $F^*$ of the supremum of the stationary OU process $X$ with parameter $\theta=(\beta,\mu,l)\in \R\times\R_+^*\times \R_+^*$ is given by 
\begin{align*}
F^*(a,\theta,t)&=\P\left(S_{[0,t[} \leq a\right)\\
&= \Phi\left(\left(a-\phila\right)\sqrt{{2l}}\right) -\int_{-\infty}^a \int_{0}^{t\beta}\e^{-\frac{l}{2}\left[ \left(a-\mu\right)^2+\left(x-\mu\right)^2-u\right]}\frac{a-x}{\pi\sqrt{\frac{2u^3}{l}}}\e^{-\frac{(a-x)^2}{2u}}\E\left[\e^{-\frac{l^2}{2}\int_0^u(r_s-a+\mu)^2\drm s}\right] \drm u \drm x
\end{align*}
where $r$ is a 3-dimensional Bessel bridge over the interval $[0,u]$ between 0 and $a-x$ and $\Phi$ is the cdf of the standard normal distribution.
\end{proposition}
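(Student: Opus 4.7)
My plan is to reduce the supremum's distribution to that of the first hitting time $\tau_a := \inf\{s\geq 0 : X_s = a\}$ and then invoke the explicit density from \cite{repalilipatie}. By path-continuity of $X$,
\begin{equation*}
\{S_{[0,t[} \leq a\} = \{X_0 \leq a\}\cap\{\tau_a \geq t\}.
\end{equation*}
Since $X_0\sim\mathcal{N}(\mu,\tfrac{1}{2l})$ is independent of the driving Brownian motion, conditioning on $X_0 = x$ (with $x<a$) yields
\begin{equation*}
F^*(a,\theta,t) = \P(X_0\leq a) - \int_{-\infty}^{a} \int_0^{t} f_{\tau_a \mid x}(s)\,\sqrt{\tfrac{l}{\pi}}\,e^{-l(x-\mu)^2}\,ds\,dx,
\end{equation*}
and $\P(X_0\leq a) = \Phi\!\left((a-\mu)\sqrt{2l}\right)$ accounts for the first term of the claim.

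Next, I would remove the $\beta$ parameter via the time change $Y_s := X_{s/\beta}$: a Brownian scaling computation gives $dY_s = l(\mu-Y_s)\,ds + d\widetilde{W}_s$ with $\widetilde{W}$ a standard Brownian motion, so $Y$ is a stationary OU with parameter $(1,\mu,l)$ and $\tau_a^X = \tau_a^Y/\beta$. The substitution $u = \beta s$ then turns the inner integral into one on $[0,\beta t]$ against $f_{\tau_a^Y\mid x}$. For the latter, I would invoke the Alili-Patie representation from \cite{repalilipatie}, which expresses the first-passage density of the unit-volatility OU from $x$ to $a>x$ as
\begin{equation*}
f_{\tau_a^Y\mid x}(u) = \frac{a-x}{\sqrt{2\pi u^3}}\,e^{-(a-x)^2/(2u)}\,e^{lu/2}\,e^{-l[(a-\mu)^2-(x-\mu)^2]/2}\,\E\!\left[e^{-\frac{l^2}{2}\int_0^u (r_s-a+\mu)^2\,ds}\right],
\end{equation*}
where $r$ is a 3-dimensional Bessel bridge on $[0,u]$ between $0$ and $a-x$.

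It remains to plug this density into the expression above and simplify. The normalization becomes $\sqrt{l/\pi}/\sqrt{2\pi u^3} = 1/(\pi\sqrt{2u^3/l})$, and the $l$-dependent arguments of the exponentials add to $\tfrac{lu}{2} - \tfrac{l}{2}(a-\mu)^2 + \tfrac{l}{2}(x-\mu)^2 - l(x-\mu)^2 = -\tfrac{l}{2}[(a-\mu)^2 + (x-\mu)^2 - u]$, reproducing precisely the integrand of the statement. The main obstacle is the first of these steps, namely importing the Alili-Patie density in the correct parametrization; once that is quoted faithfully, the subsequent Gaussian algebra and the Fubini exchange are routine.
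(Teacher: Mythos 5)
Your proposal is correct and follows essentially the same route as the paper: condition on $X_0=x$, time-change to the unit-volatility OU to eliminate $\beta$, quote the Alili--Patie first-passage density, and integrate against the stationary law $\Ncal(\mu,\tfrac{1}{2l})$; your algebraic recombination of the exponentials and the normalization $\sqrt{l/\pi}/\sqrt{2\pi u^3}=1/(\pi\sqrt{2u^3/l})$ matches the paper's computation. The only cosmetic difference is that the paper isolates the conditional cdf as a separate lemma, and your event identity $\{S_{[0,t[}\leq a\}=\{X_0\leq a\}\cap\{\tau_a\geq t\}$ holds only up to null sets (the process may touch $a$ without exceeding it), which is harmless here and implicitly present in the paper's argument as well.
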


We first need the following lemma.

\begin{lemme}
For $x\in \R$, $t\in \R^+$ and $a>x$, the cdf $F^c$ of the conditional supremum of the OU process $X$ with parameter $\theta=(\beta,\mu,l)\in \R\times\R_+^*\times \R_+^*$ starting at $X_0=x$ is given by
\begin{align*}
F^c(a,\theta,t,x)&=\P\left(S_{[0,t[}\leq a ~|~X_0=x\right)\\
&=1-\int_0^{t\beta}\e^{-\frac{l}{2}\left[ \left(a-\mu\right)^2-\left(x-\mu\right)^2-u\right]}\frac{a-x}{\sqrt{2\pi u^3}}\e^{-\frac{(a-x)^2}{2u}}\E\left[\e^{-\frac{l^2}{2}\int_0^u(r_s-a+\mu)^2\drm s}\right] \drm u
\end{align*}
where $r$ is a 3-dimensional Bessel bridge over the interval $[0,u]$ between 0 and $a-x$.\\
For $a\leq x$, $F^c(a,\theta,t,x)=0$.
\end{lemme}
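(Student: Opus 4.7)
The plan is to reduce the problem to computing the distribution of the first hitting time of level $a$ by $X$ started at $x$, and then to apply the density formula of Alili--Patie--Pedersen \cite{repalilipatie} after a scaling and a centering that turn $X$ into a standard mean-reverting OU process.

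First, I would handle the trivial case $a\leq x$: since the path $(X_t)_{t\in[0,t[}$ is continuous and $X_0=x$, one has $S_{[0,t[}\geq x$. If $a<x$ then $\P(S_{[0,t[}\leq a\mid X_0=x)=0$ directly; if $a=x$, a standard small-time law-of-the-iterated-logarithm argument for the driving Brownian motion shows that $X$ exceeds $a$ in any right neighbourhood of $0$ almost surely, hence $S_{[0,t[}>a$ a.s.\ and the probability is again $0$. This gives the second part of the lemma.

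For $a>x$, I would use the standard identity $\{S_{[0,t[}\leq a\}=\{\tau_a\geq t\}$, where $\tau_a=\inf\{s\geq 0:X_s=a\}$. Thus
\[
F^c(a,\theta,t,x)=1-\P\bigl(\tau_a<t\mid X_0=x\bigr),
\]
and it remains to identify the density of $\tau_a$. To match the hypotheses of \cite{repalilipatie}, I would first remove $\beta$ by the time change $Y_s:=X_{s/\beta}$, which satisfies $\drm Y_s=l(\mu-Y_s)\drm s+\drm W_s$ for a Brownian motion $W$; this sends $\tau_a^X$ to $\tau_a^Y/\beta$ and explains the upper bound $t\beta$ in the integral. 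I would then centre by setting $Z_s:=Y_s-\mu$, a mean-zero OU with speed $l$, starting at $x-\mu$ and to be stopped at the level $a-\mu$; note $a-\mu>x-\mu$ so the Alili--Patie--Pedersen density applies.

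Applying their Bessel-bridge representation of the first-passage density for a standard mean-reverting OU, the density of $\tau_{a-\mu}^Z$ at time $u$ is
\[
\frac{(a-\mu)-(x-\mu)}{\sqrt{2\pi u^3}}\exp\!\left(-\frac{((a-\mu)-(x-\mu))^2}{2u}\right)\exp\!\left(-\frac{l}{2}\bigl[(a-\mu)^2-(x-\mu)^2-u\bigr]\right)\E\!\left[\e^{-\frac{l^2}{2}\int_0^u (r_s-a+\mu)^2\drm s}\right],
\]
where $r$ is a 3-dimensional Bessel bridge on $[0,u]$ between $0$ and $a-x=(a-\mu)-(x-\mu)$. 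Substituting $(a-\mu)-(x-\mu)=a-x$ and integrating over $u\in[0,t\beta]$ yields exactly $\P(\tau_a^X<t\mid X_0=x)$, and subtracting from $1$ gives the announced formula. The main obstacle will be bookkeeping: making sure the rescaling of time by $\beta$, the centring by $\mu$, and the conventions used in \cite{repalilipatie} for the bridge (in particular that $r$ ends at $a-x$ and that the quadratic functional is $(r_s-a+\mu)^2$ after undoing the centring) are all consistently tracked through the three changes of variable.
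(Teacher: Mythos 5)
Your proposal is correct and follows essentially the same route as the paper: reduce the supremum event to a first-passage event, apply the time change $s\mapsto s/\beta$ and the centring by $\mu$ to obtain a standard mean-reverting OU, and then invoke the Bessel-bridge first-passage density of \cite{repalilipatie} before integrating over $u\in[0,t\beta]$. The only (harmless) difference is that you perform the scaling and the centring in two separate steps where the paper combines them into the single substitution $U_t=X_{t/\beta}-\mu$, and you additionally spell out the boundary case $a\leq x$, which the paper states but does not argue.
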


Using the hitting time density for an OU process (see \cite{repalilipatie}), we can deduce this result on the conditional cdf.

\begin{proof}
Let $a >x$ be given and fixed.

Let set $U_t=X_{\frac{t}{\beta}}-\mu$ and $W_t=\sqrt{\beta}B_{\frac{t}{\beta}}$ (which is thus a standard Brownian motion). Then the dynamic of $(U_t)_{t \geq 0}$ is
\[\drm U_t = -l U_t\drm t + \drm W_t ,~~~U_0 =u_0=x-\mu \in \R.\]
For $b >u_0$, we introduce the first passage time $H_b=\inf\{s \geq 0 ; U_s =b \}$.

Since $\P\left(\sup_{0 \leq u < t}U_u \leq b  ~|~U_0=u_0\right)=\P(H_b>t ~|~U_0=u_0)$, we have : 
\[\P\left(S_{[0,t[} \leq a ~|~X_0=x\right) = \P\left(\sup_{0 \leq u < t\beta}U_u \leq a-\mu ~|~U_0=u_0\right)=\P\left(H_{a-\mu}>t\beta ~|~U_0=u_0\right).\]
We conclude using the density  $f_{H_b}$ of $H_b$ given (in \cite{repalilipatie}) by  :
\[f_{H_b}(u)=\e^{-\frac{\mu}{2}(b^2-u_0^2-u)}\frac{b-u_0}{\sqrt{2\pi u^3}}\e^{-\frac{(b-u_0)^2}{2u}}\E\left[\e^{\frac{-\mu^2}{2}\int_0^u(r_s-b)^2\drm s} \right]\]
where $r$ is a 3-dimensional Bessel bridge over the interval $[0,u]$ between 0 and $b-u_0$.
Then, we have
\begin{align*}
\P\left(S_{[0,t[} \leq a ~|~X_0=x\right) &=1-\int_0^{t\beta}\e^{-\frac{l}{2}\left[\left(a-\mu\right)^2-\left(x-\mu\right)^2-u\right]}\frac{a-x}{\sqrt{2\pi u^3}}\e^{-\frac{(a-x)^2}{2u}}\E\left[\e^{\frac{l^2}{2}\int_0^u\left(r_s-a+\mu\right)^2\drm s} \right] \drm u
\end{align*}
with $r$ a 3-dimensional Bessel bridge over the interval $[0,u]$ between 0 and $a-x$.
\end{proof}

\begin{remarque} Similarly, we can obtain the cdf $F_c$ of the conditional infimum of $X$. For $x\in \R$, $t\in \R^+$, $a<x$ and $\theta \in \R\times\R_+^*\times \R_+^*$, we have
\begin{align*}
F_c(a,\theta,t,x)&=\P\left(I_{[0,t[} \leq a ~|~X_0=x\right)\\
&=\int_0^{t\beta}\e^{-\frac{l}{2}\left[ \left(a-\mu\right)^2-\left(x-\mu\right)^2-u\right]}\frac{x-a}{\sqrt{2\pi u^3}}\e^{-\frac{(a-x)^2}{2u}}\E\left[\e^{-\frac{l^2}{2}\int_0^u(r_s+a-\mu)^2\drm s}\right] \drm u
\end{align*}
 with $r$ a 3-dimensional Bessel bridge over the interval $[0,u]$ between 0 and $x-a$. 
 
 For $x\leq a$, $F_c(a,\theta,t,x)=0$.
\end{remarque}

\begin{proof}[Proof of Proposition \ref{prop_cdf_full}]
Integrating with respect to the law of $X_0$, we can express the cdf $F^*$ of the supremum of the OU process $X$ with parameter $\theta=(\beta,\mu,l)\in \R\times\R_+^*\times \R_+^*$.
\begin{align*}
F^*(a,\theta,t)&=\P\left(S_{[0,t[} \leq a\right)\\
&=\int_{-\infty}^a F^c(a,\theta,t,x) \drm x \\
&= \Phi\left(\left(a-\phila\right)\sqrt{{2l}}\right) -\int_{-\infty}^a \int_{0}^{t\beta}\e^{-\frac{l}{2}\left[ \left(a-\mu\right)^2+\left(x-\mu\right)^2-u\right]}\frac{a-x}{\pi\sqrt{\frac{2u^3}{l}}}\e^{-\frac{(a-x)^2}{2u}}\E\left[\e^{-\frac{l^2}{2}\int_0^u(r_s-a+\mu)^2\drm s}\right] \drm u \drm x
\end{align*}
\end{proof}

\begin{remarque}
Note that the cdf $F^*$ is decreasing with respect to $\beta$. Numerically, the cdf seems to be decreasing with respect to $l$ and $\mu$.
\end{remarque}

\subsection{Mixing property}
In order to get statistical properties of estimators, some mixing properties are usually required. Indeed, statistics beyond independence have received a deep attention from the 90's. Mixing is used instead of independence and results such as Laws of Large Numbers or Central Limit Theorems may still hold. There is a very large literature on that subject and we refer to \cite{billingsley,doukhan,rio} and the references therein for definitions and main results. 

Roughly speaking, mixing properties of a process $(Y_t)_{t\in\mathbb{R}}$ quantify the convergence to $0$ as $r$ goes to infinity of
\[\mbox{Cov}\left(f(Y_{z_1}\/,\ldots\/, Y_{z_j})\/,g(Y_{z_{j+1}}\/,\ldots \/, Y_{z_\ell})\right) \]
for $f$ and $g$ in an appropriate class of measurable functions and $0<z_1 < \cdots < z_j \leq z_j+r\leq z_{j+1} <\cdots < z_\ell$. The following proposition means that $(S_{[s\/,t[})_{0\leq s<t}$ is exponentially $\rho$-mixing.

\begin{proposition}[Mixing property]
Let us consider an OU with parameter $\theta=(\beta,\mu,l)$. For any $s,r \geq 0$, for any function $f:\mathcal{C}^0\left([0,s],\R\right) \rightarrow \R$, 
$g:\mathcal{C}^0\left([s+r,+\infty],\R\right) \rightarrow \R$ such that $f$, $g$ are square-integrable with respect to the law of $S$,  and for any $0\leq u\leq s \leq s+r\leq v$,  we have
\begin{equation*}
\abs{\Cov\left[f\left(\left(S_{[0,u[}\right)_{u \leq s}\right),g\left(\left(S_{[s+r,v[}\right)_{v \geq s+r}\right)\right]} \leq \e^{-l\beta r} \sqrt{\Var\left[f\left(\left(S_{[0,u[}\right)_{u \leq s}\right)\right]\Var\left[g\left(\left(S_{[s+r,v[}\right)_{v \geq s+r}\right)\right]}
\end{equation*}
\end{proposition}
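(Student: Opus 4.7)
The idea is to use the Markov property of the OU process to collapse the two path functionals onto the pair $(X_s, X_{s+r})$, and then exploit the fact that this pair is jointly Gaussian with an explicit correlation.

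First, I would set $F := f\bigl((S_{[0,u[})_{u\leq s}\bigr)$ and $G := g\bigl((S_{[s+r,v[})_{v\geq s+r}\bigr)$, and observe that because $S_{[a,b[}$ is measurable with respect to the $\sigma$-algebra generated by the path on $[a,b]$, $F$ is measurable with respect to $\mathcal{F}_s := \sigma(X_t,\, t\leq s)$ and $G$ with respect to $\mathcal{G}_{s+r} := \sigma(X_t,\, t\geq s+r)$. By the Markov property of $X$, conditionally on $(X_s,X_{s+r})$ the $\sigma$-algebras $\mathcal{F}_s$ and $\mathcal{G}_{s+r}$ are independent, and moreover $E[F\mid X_s,X_{s+r}] = E[F\mid X_s] =: \phi(X_s)$ while $E[G\mid X_s,X_{s+r}] = E[G\mid X_{s+r}] =: \psi(X_{s+r})$. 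Taking expectations of the identity $E[FG\mid X_s,X_{s+r}] = \phi(X_s)\psi(X_{s+r})$ yields
\begin{equation*}
\Cov(F,G) = \Cov\bigl(\phi(X_s),\psi(X_{s+r})\bigr).
\end{equation*}

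Second, the pair $(X_s,X_{s+r})$ is a stationary Gaussian vector: a direct computation on $Y_t := X_t - \mu$, using the explicit representation $Y_{s+r} = Y_s\e^{-l\beta r} + \sqrt{\beta}\int_s^{s+r}\e^{-l\beta(s+r-u)}\drm B_u$ and $\Var(Y_s) = \frac{1}{2l}$, gives $\Cov(X_s,X_{s+r}) = \frac{1}{2l}\e^{-l\beta r}$, so their correlation coefficient is exactly $\e^{-l\beta r}$. The Gebelein (or Lancaster) inequality for Gaussian pairs then asserts that for any square-integrable $\phi,\psi$,
\begin{equation*}
\bigl|\Cov(\phi(X_s),\psi(X_{s+r}))\bigr| \leq \e^{-l\beta r}\sqrt{\Var(\phi(X_s))\,\Var(\psi(X_{s+r}))}.
\end{equation*}
Combined with Jensen's inequality $\Var(\phi(X_s)) = \Var(E[F\mid X_s]) \leq \Var(F)$ and similarly for $\psi$, this delivers the claimed bound.

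The main obstacle, and the only nontrivial step, is the Gaussian maximal correlation inequality: unless one invokes it as a black box, it must be justified via the Hermite polynomial expansion of $\phi$ and $\psi$ in $L^2$ of the Gaussian measure, together with Mehler's formula, which diagonalises the conditional expectation operator $\psi \mapsto E[\psi(X_{s+r})\mid X_s]$ with eigenvalues $(\e^{-l\beta r})^n$ on the $n$-th Hermite component; the spectral gap $\e^{-l\beta r}$ then gives the bound by Cauchy--Schwarz. All other steps (measurability, Markov collapse, Jensen) are routine.
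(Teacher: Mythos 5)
Your proof is correct and self-contained. The paper itself does not spell the argument out: its entire proof is a pointer to Theorem 2.1 of the cited Gobet et al.\ reference, adapted to the one-dimensional OU equation, with $\varphi=f\circ\sup$ and $\phi=g\circ\sup$. The mechanism behind that cited theorem is exactly the one you reconstruct: use conditional independence of past and future given the present to collapse $F$ and $G$ onto the jointly Gaussian pair $(X_s,X_{s+r})$, whose correlation under the stationary law is $\e^{-l\beta r}$, then apply the Gebelein maximal-correlation inequality and Jensen. So in effect you have supplied the details that the paper delegates to the reference. Two minor remarks: the identity $\E[F\mid X_s,X_{s+r}]=\E[F\mid X_s]$ follows from the conditional independence of past and future given $X_s$ (no time-reversibility of the OU process is needed), and it is worth stating explicitly that $\sup$ is a measurable functional of the continuous path so that $F$ and $G$ are indeed measurable with respect to $\sigma(X_t,\,t\leq s)$ and $\sigma(X_t,\,t\geq s+r)$; both points are routine and your outline already treats them as such.
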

\begin{proof}
We can easily adapt the proof of Theorem 2.1 in \cite{gobet2016parameter} in  the one-dimensional case of the OU process satisfying the equation \[ \drm X_t = l\beta(\mu - X_t)\drm t + \sqrt{\beta}\drm B_t ,~~~X_0 \sim \Ncal\left(\mu,\frac{1}{2l}\right)\]

Then, keeping the notation from \cite{gobet2016parameter}, one may take $\varphi = f \circ \sup $ and $\phi= g \circ \sup$ which are square-integrable with respect to the law of $X$ by hypothesis.\end{proof}

\subsection{Consistency of the estimation}
Following the idea of the proof of Theorem II.5.1 in \cite{antoniadis1992regression}, we may prove the consistency of our estimation of the parameter $\theta_0$ provided that the $s_j$, $j=1\:,\ldots\/,N_q$ are chosen such that the function
\begin{equation*}
\fnct{\Psi}{\R \times \R_+^* \times \R_+^*}{[0,1]^{N_q}}{\theta }{\left(F^*(s_j\/,\theta,h)\right)_{j=1\/,\ldots\/,N_q}}
\end{equation*}
is injective and that the parameter $\theta$ belong to a compact subset of $\R \times \R_+^* \times \R_+^*$. 

\begin{proposition}\label{prop:consistency}
Consider an OU process with parameters $\theta_0= (\beta_0,\mu_0,l_0)$. Assume that the parameters $\theta_0$ belong to $\Theta$ a compact subset of $\R \times \R_+^* \times \R_+^*$. 
For any $n\in\mathbb{N}^*$, let $\widehat{\theta}_n=\left(\widehat{\beta}_n,\widehat{\mu}_n,\widehat{l}_n\right)$ be given by (\ref{minimisation_problem}). Then, any a.s. limit point $\theta^*$ of $(\widehat{\theta}_n)_{n\in\mathbb{N}^*}$ satisfies $\Psi(\theta^*)=\Psi(\theta_0)$.
\end{proposition}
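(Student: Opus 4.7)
The plan is to run the classical argmin-consistency argument adapted to the mixing setup provided in the previous subsection. Concretely, I would introduce the deterministic contrast
\begin{equation*}
Q(\theta) := \sum_{j=1}^{N_q}\bigl[F^*(s_j,\theta,h)-F^*(s_j,\theta_0,h)\bigr]^2,
\end{equation*}
which is nonnegative, satisfies $Q(\theta_0)=0$, and whose zero set is exactly $\Psi^{-1}(\Psi(\theta_0))$. By dominated convergence applied to the explicit integral representation in Proposition \ref{prop_cdf_full}, $\theta\mapsto F^*(s_j,\theta,h)$ is continuous, so $Q$ is continuous on the compact $\Theta$.

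Next I would establish the pointwise convergence $F_n^*(s_j)\to F^*(s_j,\theta_0,h)$ almost surely for each fixed $j\in\{1,\ldots,N_q\}$. The random variables $Y_i^{(j)}:=\ind_{S_{[t_{i-1},t_i[}\leq s_j}$ are $[0,1]$-valued and, because $X$ is taken under its invariant law and the blocks $[t_{i-1},t_i[$ all have length $h$, they form a stationary sequence. The previous proposition shows that this sequence is $\rho$-mixing with exponential rate, so a classical strong law of large numbers for bounded, stationary, $\rho$-mixing sequences (see e.g.\ \cite{rio,doukhan}) yields the desired almost sure convergence. In particular $Q_n(\theta_0)\to 0$ almost surely.

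I would then upgrade this to uniform convergence on $\Theta$. A direct factorisation gives
\begin{equation*}
Q_n(\theta)-Q(\theta)=\sum_{j=1}^{N_q}\bigl[F^*(s_j,\theta_0,h)-F_n^*(s_j)\bigr]\bigl[2F^*(s_j,\theta,h)-F_n^*(s_j)-F^*(s_j,\theta_0,h)\bigr],
\end{equation*}
and since each factor inside the second bracket lies in $[0,1]$ uniformly in $\theta$,
\begin{equation*}
\sup_{\theta\in\Theta}\bigl|Q_n(\theta)-Q(\theta)\bigr|\leq 4\sum_{j=1}^{N_q}\bigl|F_n^*(s_j)-F^*(s_j,\theta_0,h)\bigr|\xrightarrow[n\to\infty]{a.s.}0.
\end{equation*}
Finally, for any a.s.\ limit point $\theta^*\in\Theta$ with $\widehat\theta_{n_k}\to\theta^*$ along a subsequence, the minimisation property gives $Q_{n_k}(\widehat\theta_{n_k})\leq Q_{n_k}(\theta_0)\to 0$, while uniform convergence combined with continuity of $Q$ yields $Q_{n_k}(\widehat\theta_{n_k})\to Q(\theta^*)$. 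Thus $Q(\theta^*)=0$, which is exactly $\Psi(\theta^*)=\Psi(\theta_0)$.

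The main obstacle is the second step: invoking a strong law of large numbers valid under the exact form of mixing established earlier. The previous proposition only controls $\rho$-mixing covariances, so one must pick the LLN version that uses this alone (not e.g.\ $\beta$- or $\phi$-mixing), and check that stationarity of the block-suprema really follows from stationarity of $X$. A secondary, more routine point is verifying the continuity of $F^*$ in $\theta$ from the integral in Proposition \ref{prop_cdf_full}; once this and the LLN are in place, the uniform convergence and argmin-consistency arguments are essentially bookkeeping.
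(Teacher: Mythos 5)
Your proposal is correct and follows essentially the same route as the paper: the same decomposition of $Q_n(\theta)$ around $\epsilon_n(s_j)=F^*(s_j,\theta_0,h)-F_n^*(s_j)$, the same appeal to a law of large numbers (the paper cites the ergodic theorem for mixing sequences) to get $\epsilon_n(s_j)\to 0$ a.s., and the same argmin-plus-subsequence argument concluding $Q(\theta^*)=0$. You are merely more explicit than the paper about the continuity of $\theta\mapsto F^*(s_j,\theta,h)$ and the uniform convergence of $Q_n$ to $Q$ on $\Theta$, steps the paper leaves implicit.
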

\begin{proof}
We adapt the lines of the proof of Theorem II.5.1 in \cite{antoniadis1992regression} and use the ergodic theorem for mixing sequences (see \cite{billingsley} e.g.). 

We denote
\[\widehat{\theta}_n = \text{argmin}_{\theta\in\Theta} Q_n(\theta) \/.\]
Since $\Theta$ is a compact set, the sequence $\widehat{\theta}_n$ has  limit points. Let $\theta^*$ be any limit point of $\widehat{\theta}_n$. Since no confusion can be made, and in order to 
simplify notations, we use $F^*(\cdot,\cdot)$ instead of $F^*(\cdot,\cdot,h)$ for this proof. For $j=1\/,\ldots N_q$, let $\epsilon_n(s_j) = F^*(s_j\/,\theta_0)-F_n(s_j)$ we write
\begin{equation}\label{eq:consistency}
Q_n(\theta) = \sum_{j=1}^{N_q} \epsilon_n(s_j)^2 + \sum_{j=1}^{N_q} (F^*(s_j\/,\theta_0) - F^*(s_j\/,\theta))^2 - 2 \sum_{j=1}^{N_q} \epsilon_n(s_j)(F^*(s_j\/,\theta_0) - F^*(s_j\/,\theta)) \/.
\end{equation}
The ergodic theorem for mixing sequences implies that $\epsilon_n(s_j)$ goes to $0$ a.e. as $n$ goes to infinity for $j=1\/,\ldots\/,N_q$. Now,
\[Q_n(\widehat{\theta}_n) \leq Q_n(\theta_0)\/.\]
Let $n_k$ be a subsequence such that $\widehat{\theta}_{n_k}$ goes to $\theta^*$, using (\ref{eq:consistency}), we have
\[Q_{n_k}(\widehat{\theta}_{n_k})  \longrightarrow \sum_{j=1}^{N_q}  (F^*(s_j\/,\theta_0) - F^*(s_j\/,\theta^*))^2 \ \mbox{a.e.}\]
and $Q_n(\theta_0) \longrightarrow 0$ a.e. We deduce that 
\[F^*(s_j\/,\theta_0) - F^*(s_j\/,\theta^*) =0 \/, \ j=1\/,\ldots\/, N_q\]
which gives the announced result.
\end{proof}
\begin{remarque}
Of course, if the application $\Psi$ is injective, then Proposition \ref{prop:consistency} implies that $\theta^*=\theta_0$ and thus $\widehat{\theta}_n$ goes to $\theta_0$ a.s. as $n$ goes 
to infinity. From some numerical tests, it seems that the injectivity is satisfied.
\end{remarque}
%%%%%%%%%%
\section{Numerical Applications}
In this section, we want to estimate the parameters, first on some simulated data, and then on real ones. First of all, we need to implement the cdf of the supremum of the OU process $X$ with 3 parameters.

We want to make an estimation on real data with daily suprema observations. This is why without any precision, $h$ will be equal to $1$ for numerical applications in the rest of the paper.
\subsection{Cdf Numerical Computation}

We describe here the used method for the numerical computation. Contrary to what is written in \cite{repalilipatie}, the process $r$ is the unique solution of the following SDE (\cite{Downes2008})
\[\drm r_s=\left(-\frac{r_s}{u-s}+\frac{a-x}{(u-s)\tanh\left(\frac{r_s(a-x)}{u-s} \right)}\right)\drm s+\drm B_s,~~~~ 0<s<u,~~~~ r_0=0.\]
Since the process starts from 0 here, the Euler scheme cannot be applied for this SDE. Recall that the process  $(r_s)_{s \leq u}$ with $r_0=0$ and $r_u=a-x$ and the process $(\tilde{r}_s)_{s \leq u}$ defined by $\tilde{r}_s=r_{u-s}$ with $\tilde{r}_0=a-x$ and $\tilde{r}_u=0$ have same distributions (Exercise XI.3.7 of \cite{revuzyor}).
Therehence, we can use the Euler scheme on the switched Bessel bridge $(\tilde{r}_s)_{s \leq u}$ which verifies the SDE (\cite{revuzyor})
\[\drm \tilde{r}_s=\left(-\frac{\tilde{r}_s}{u-s}+\frac{1}{\tilde{r}_s}\right)\drm s+\drm B_s,~~~~ 0<s<u,~~~~ \tilde{r}_0=a-x.\]
Finally, the integrals are computed by considering the corresponding Riemann sum and the expectation by a Monte-Carlo method with $M=10000$ simulations.\\
The code is written in C++ and the evaluation of the function is very long. Consequently, we had to make a parallel code. Yet, the function "rand" in C++ is not thread safe. Thus, we propose to use the Mersenne Twister generator for the simulation of the random numbers. With the parallelisation, the time for one evaluation of the function $Q_n$ to be minimized has been divided approximately by 5 but is still long (around 52 secs for $N_q=4$, $\theta=(47.5,22,0.02)$, with a 40 cores machine). The duration is not a problem since the optimisation needs to be done once and for all.

\subsection{Bounding parameters}
The problem \eqref{minimisation_problem} is solved by an algorithm which performs a Nelder-Mead method. More precisely, we use \texttt{optim} procedure on the software R. Initial values for the parameters to 
be optimized over are required. To set those initial values, we propose to bound each parameter. For each of them, we give here a lower and an upper bound. 

As well as we observe the maxima, suppose we also have the minima : $I_{[t_{i-1},t_i[}$ (still for $(t_i)_{i\geq 0}$ a partition of $[0,T]$ of constant step $h \geq 0$). 
Then, the available quantities for bounding the parameters are the minima mean, denoted $m_{\text{min}}$; the maxima mean, denoted $m_{\text{max}}$; the smallest observed temperature, denoted 
$\text{rec}_\text{min}$ and the largest one, denoted $\text{rec}_\text{max}$.

Let us recall the OU process is assumed to be stationary, then we have, for all $t \geq 0$,
\begin{equation*}
\E[X_t]=\mu_0~~~~~~~~~~~~~~ \Var[X_t]=\frac{1}{2l_0} 
\end{equation*}
The expectation gives us natural bounds for the parameter $\mu_0$ : 
\[m_{\text{min}}\leq \mu_0 \leq m_{\text{max}}.\]
Moreover, we have $l=\frac{1}{2\Var[X_t]}.$
As for all $i$, $\abs{X_{t_i}-\bar X}\leq \max\left(\abs{\text{rec}_{\text{min}}-m_{\text{max}}};\abs{\text{rec}_{\text{max}}-m_{\text{min}}}\right)$, one may say that it is natural to upper bound the variance by \[\Var[X_t] \leq \left(\max\left(\abs{\text{rec}_{\text{min}}-m_{\text{max}}};\abs{\text{rec}_{\text{max}}-m_{\text{min}}}\right)\right)^2 \]
It then gives us a lower-bound $l_{\text{min}}$ for $\labee$. 

For the upper-bound, we use Theorem 2.7 of \cite{LiShao}. For all $x>0$, we have 
\[p_x : = \P\left(S_{[0,1[} - \E\left[S_{[0,1[}\right] \geq x\right) \leq \exp\left(-l_0x^2\right).\]
Hence \[l_0\leq l_{\text{max}} =\inf_{x>0}\left[\frac{-\ln(p_x)}{x^2}\right].\]
It remains to find the domain of $\beta_0$. First of all, $\beta_0 >0$.

Since $\beta_0=\frac{\langle X\rangle_T}{T}$, a classical estimator of $\beta_0$ (see \cite{Lebreton}) is $\hat \beta_0 = \frac{1}{T}\sum_{i=0}^{n-1}\left(X_{t_{i+1}}-X_{t_{i}}\right)^2$. 

Then, 
\[\beta_0 \leq \beta_\text{max}=\frac{1}{T}\sum_{i=1}^{n-1}\max\left[\left(S_{[t_{i},t_{i+1}[}-I_{[t_{i-1} ,t_i[}\right)^2,\left(I_{[t_{i},t_{i+1}[}-S_{[t_{i-1},t_i[}\right)^2\right]\]
Finally, $\widehat{\theta}_n=\text{argmin}_{\theta \in C} Q_n(\theta)$ where $C=\left[0,\beta_\text{max}\right]\times \left[m_{\text{min}},m_{\text{max}}\right]\times \left[l_{\text{min}},l_{\text{max}}\right] \subset \R \times \R_+^* \times \R_+^*$

\subsection{Parameters estimation on simulated data}
\label{sec:simulateddata} 
We are going to test our method on simulated data.  To choose realistic parameters, we use some temperatures data. The mean temperature leads us to take $\mu_0=22$. Using the difference between the maximal (respectively minimal) temperature and the mean temperature, we take $\sqrt{\frac{1}{2l_0}}=5$. The hourly 
correlation allows us to set $l_0\beta_0=0.95$ (see e.g. \cite{gringorten1968estimating} and \cite{BrodySyrokaZervos2002}). Then, $\theta_0=(47.5,22,0.02)$. 

We made several tests to make a compromise between the algorithm complexity and the precision of the estimation (with RMSE) which lead us to take $N_q=4$ here. Then, we take $s_1,s_2,s_3,s_4$ the empirical quantiles on the sample for $0.2,0.4,0.6$ and $0.8$ respectively so that there are uniformly distributed on the interval $[0,1]$. The $s_j$ values are settled and fixed for the whole estimation procedure.

50 samples are simulated over $T=n=1000$ days for each, with an Euler Scheme (time scale $\drm t =10^{-3}$ days), with $X_0 \sim \Ncal\left(22,25\right)$. The algorithm is launched on R twice on those 50 samples, once with the truncated samples over 100 days and one with the whole samples. The parameters found minimizing $Q_{100}$ and $Q_{1000}$ are presented in the following boxplots: 

\begin{figure}[H]
\centering
\includegraphics[width=12cm]{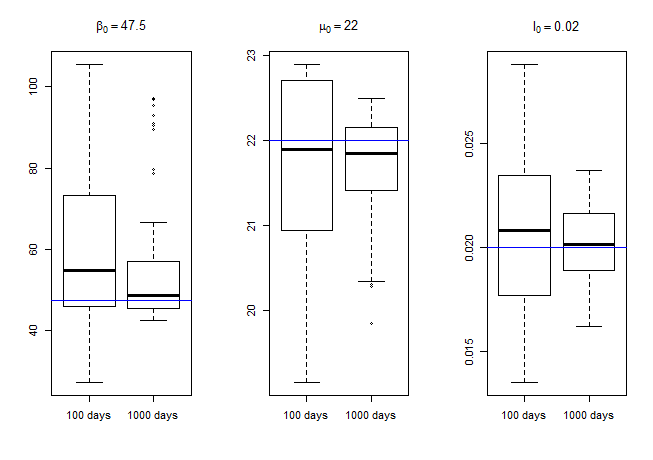}
\caption{Boxplots of the estimated parameters, the real value is indicated by the blue line.}
\label{boxplot}
\end{figure}
As we expected, the estimations are better on a larger sample. The relative RMSE for $\beta_0$, $\philaa$ and $\labee$ are respectively equal 0.4955, 0.04759 and 0.2194 for the small samples (100 days) and 0.4205, 0.03453 and 0.08928 for the larger samples (1000 days). The median parameters are satisfying. However, the parameters $\beta_0$ and $\mu_0$ seem biased. It appears that $\beta_0$ tends to be overestimated and on the contrary $\mu_0$ underestimated. Moreover, we observe a big variation in the estimators of $\beta_0$. It is confirmed by the relative RMSE (see above).
Better results are obtained if $\beta_0$ is fixed and performing a 2D-estimation, as in \cite{mullowney2008parameter}. Indeed, the relatives RMSE for $\philaa$ and $\labee$ are then respectively equal to 0.0107 and 0.0929. It is consistent with the results in \cite{martinbibby1995} where $\beta_0$ is assumed to be known.

\subsection{Real data}
\subsubsection{Parameters estimation}

In \cite{KleinTank2002}, daily temperature dataset in Paris through the ECA\&D project is provided (Data and metadata available at \url{http://www.ecad.eu}). This dataset is one of the longest in temperature measurement since it begins in 1900 but it records only maximum, minimum and mean daily temperature. In our application, we study daily summer temperature. In that way, we select maximal and minimal temperatures from 15th of june to the 14th of august ($61$ days) each year between 1950 and 1984 included, representing $35$ years of records and $2135$ days. These years are selected in order to avoid climate change influence so we can consider the dataset as maximum observations of a stationary process (see \cite{GIEC}). This is our train sample. We will keep the years after 1985 for the test sample. 

When we apply the estimation procedure presented in Section \ref{sec:estimprob}, we find $\widehat{\theta_0}=(34.35, 19.04, 0.02633)$. In order to assess the quality of this estimation, we propose to compare some theoretical quantities with empirical ones. That is done in the next section.

\subsubsection{Estimation validation}
To verify the estimation, we propose two models validation indicators: : comparison of quantiles and prediction.  The first validation indicator is just to check the quantile-quantile matching over our train sample. The second one is a validation using prediction and then does not use the train sample. 

\begin{figure}[H]
\centering
\includegraphics[width=10cm]{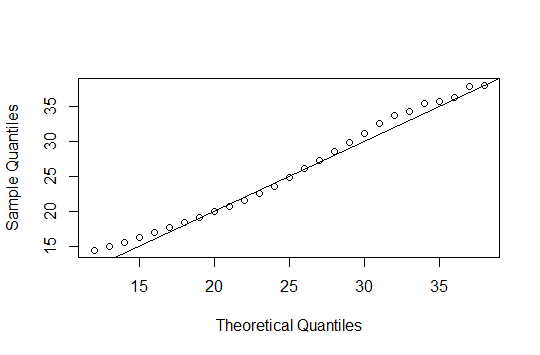}
\caption{Quantile-Quantile plot}
\label{qqplot}
\end{figure}
The first thing to check is the match of quantiles. To this aim, we draw a quantile-quantile plot (see Figure \ref{qqplot}). The plotted points fall near the line $y=x$  which indicates that the quantiles of the theoretical and data distributions agree.
%The linearity of the point pattern indicates that the measurements are distributed with the estimated law. 

We also want to assess the estimation quality by a prediction method. The estimation ends on the 14/08/1984. We take the mean temperature of the 14/06/1985 as an initial point to simulate processes (time scale $\drm t = 10^{-3}$ days) on 10 days with the estimated parameters. Then, we make a confidence interval by Monte Carlo simulations (1000 simulations) for the maxima over each of those days and compare it with the real values (between 15/06/1985 and 24/06/1985, in the test sample). 
\begin{figure}[H]
\centering
\includegraphics[width=10cm]{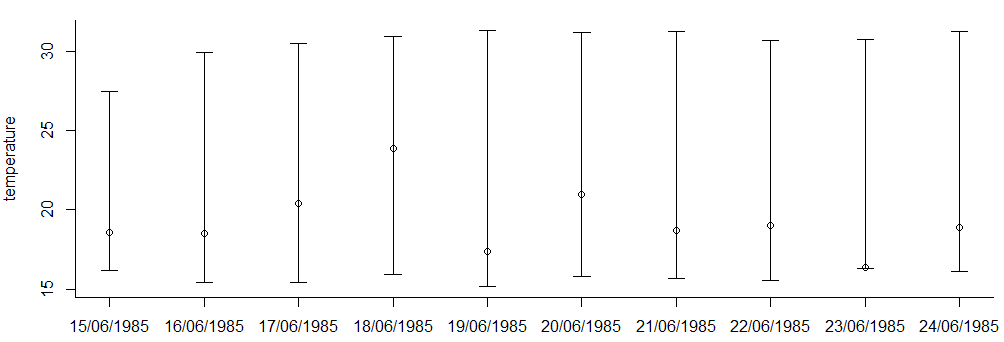}
\caption{Confidence limits at 95\% for the maxima between 15/06/1985 and 24/06/1985}
\end{figure}
We observe that the real values (the dots) are all in the confidence interval which confirms the pertinence of our model. We also made this test with 30 days and the results were also very good.

\subsection{Risk measures}

\subsubsection{Definitions}
Let us recall the goal of our study. We want to estimate some risk measures related to heat waves. Let us note that one may estimate any risk measure of his choice. Indeed, once the minimisation is performed, we can use a Monte-Carlo method simulating independent processes with estimated parameters.

There is two classical definition for a heat wave. The first one (see \cite{meteofrance}) is a sequence of consecutive days ($\Delta$ days) for which the maximum daily temperature is larger than a high-level threshold  ($a_\text{max}$) and the minimum daily temperature is greater than a low level one ($a_\text{min}$). Those temperatures thresholds ($a_\text{max}$, $a_\text{min}$) depend on the geographical zone. The second definition (see \cite{gringorten1968estimating}) is a sequence of consecutive days ($\Delta$ days) for which the minimum daily temperature is greater than a level ($a$).

As we have daily observations, to simplify the expression, we take $t_i=i \in \llbracket 0 , n \rrbracket $ and $S_{[0,1[}$ is then the supremum on the first day for example. \\
We define the two random variables 
\[m^S_{[i,i+\delta[}=\min\left(S_{[i,{i+1}[},\dots,S_{[i+\delta-1,i+\delta[}\right)\]
 and 
 \[m^I_{[i,i+\delta[}=\min\left(I_{[i,{i+1}[},\dots,I_{[i+\delta-1,i+\delta[}\right)\]
  for $\delta \in \llbracket 1, n-i \rrbracket $.\\ 
Then, we can express the probability of heat wave (for the first definition): \[\P \left(\exists i \in \llbracket 0, n-\Delta \rrbracket ,~m^S_{[i,i+\Delta[}\geq a_\text{max} \text{ , } m^I_{[i,i+\Delta[}\geq a_\text{min} \right).\]

Another interesting measure is the duration of an heat wave. Let us note, when there exists 
\[\tau_{\text{in}}= \min \{i \in \llbracket 0, n-\Delta \rrbracket,~m^S_{[i,i+\Delta[}\geq a_\text{max} \text{ , } m^I_{[i,i+\Delta[}\geq a_\text{min}\}\]
 and
 \[\tau_{\text{out}}= \tau_{\text{in}}+\max\{\delta \geq \Delta,~ m^S_{[i,i+\delta[}\geq a_\text{max} \text{ , } m^I_{[i,i+\delta[}\geq a_\text{min}\}.\] 
 Then, the mean duration of an heat wave is
\[\E\left[\tau_{\text{out}}-\tau_{\text{in}}~|~~m^S_{[\tau_{\text{in}},\tau_{\text{out}}[}\geq a_\text{max} \text{ , } m^I_{[\tau_{\text{in}},\tau_{\text{out}}[}\geq a_\text{min}\right]\] 

Using the second definition, we can also measure the severity of a heat wave with the area over the threshold for the first $\Delta$ days. Let $i_0$ be the first moment of a heat wave. This area is :
\[E=\E\left[\int_{i_0}^{i_0+\Delta} (X_s-a) \drm s~|~~m^I_{[i_0,i_0+\Delta[}\geq a \right]\]
As the process is assumed to be stationary, this quantity does not depend on $i_0$.

\subsubsection{Simulated data}
Since our goal is to estimate risk measures, we would like to see how they are impacted by the estimation of the parameters. We propose here to look at the last one, $E$, which uses the process itself.
\begin{figure}[H]
\centering
\includegraphics[width=4cm]{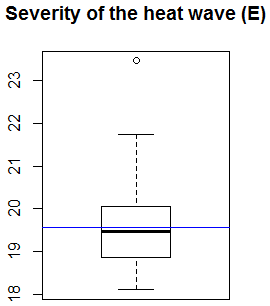}
\caption{Boxplot of the risk measure $E$ for the different estimated parameters. The level of the blue line ($19.57$) indicates the value for the real parameter $\theta_0$}
\label{box_espe}
\end{figure}
Figure \ref{box_espe} shows the boxplot of the risk measure obtained by Monte-Carlo simulation ($7\times 10^6$ simulations) for the estimated parameters found in Section \ref{sec:simulateddata} with $\Delta=3$ and $a=26.67$ (see \cite{gringorten1968estimating}). The relative RMSE for $E$ is 0.05291 which is satisfying.

\subsubsection{Real data}
We use the markers of Météo France for Paris (see  \cite{meteofrance}), we take $\Delta=3$, $a_\text{min}=21^\circ$C and $a_\text{max}=31^\circ$C. As we want to estimate the measures for a summer, we take $n=61$ days. Those measures are calculated with the estimated parameters on the real data, namely $\widehat{\theta_0}=(34.35, 19.04, 0.02633)$.

For the probability of heat wave, the Monte-Carlo method is performed with the simulation of $10^8$ years of $61$ days and we obtain a probability of $2.57\times 10^{-2}$ for a summer. There were 2 heat waves between 1985 to 2011 then a proportion of $7.41\times 10^{-2}$. This highlights the deviation of the temperatures in the last decades, due to climate change (\cite{GIEC}).

With $10^6$ simulations for the Monte-Carlo, we obtain a mean duration for an heat wave of 3.2 days. The 2 heat waves had lasted respectively 3 and 10 days.

\section{Conclusion and and future research directions}
In this paper, a new method to estimate the parameters of an OU process is proposed. Indeed, the proposed method includes a least square estimation based on the suprema observations. To this aim, the cdf of the suprema of an OU is given and theoretical results, including consistency of
model parameter estimation, are established. 

The numerical applications on real and simulated data prove the goodness of the estimation and its relevance. Risk measures such as the probability of heat wave or the duration of one have been studied and compared with the reality. The proposed model is also able to predict temperatures for a few days. 

Some directions for further investigations are summarized as follows. For example, in continuity with this work, obtaining explicit expressions of risk measures may be interesting in the model. To this aim, one may know the joint law of the supremum and the process. Moreover, another interesting estimation for the parameters of the process might be done using Maximum Simulated Likelihood Estimation (see \cite{MSLE}).

\section{Appendix}
\label{annexe}

\begin{figure}[H]
\centering
\includegraphics[scale=0.55]{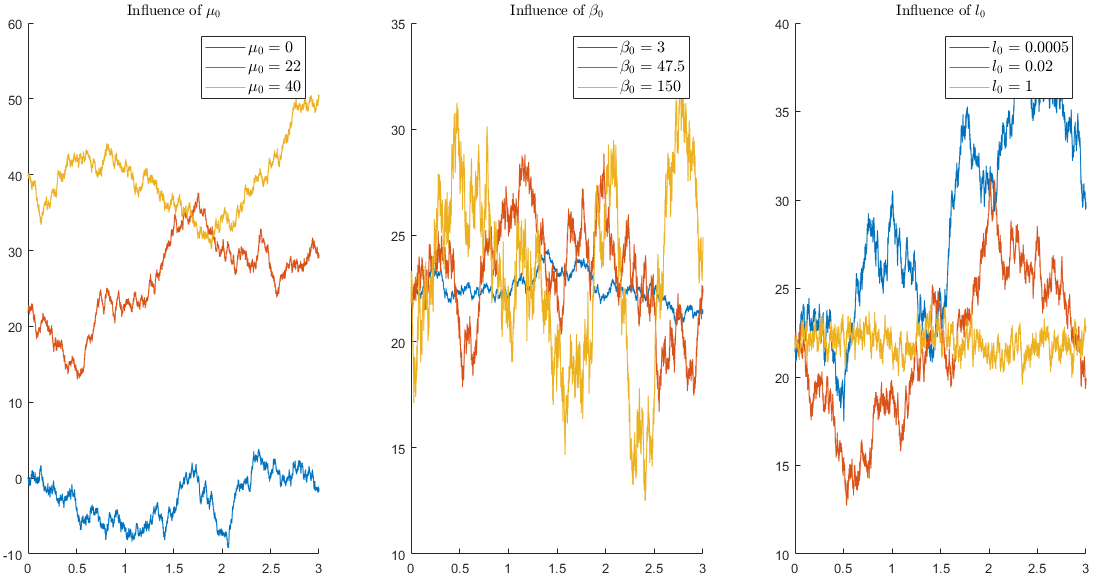}
\caption{Different trajectories of Ornstein-Uhlenbeck processes for different parameters. When not precised, the other parameters are $\mu_0=22$, $l_0=0.02$ and $\beta_0=47.5$. The time scale is $\drm t = 10^{-4}$ days. For each trajectories, $X_0=\mu_0$. } 
\label{influparam}
\end{figure}
%-----------------Bibliographie----------------
\bibliographystyle{alpha}
%\nocite{*}
\bibliography{bibarticle}

             \end{document}